\documentclass[a4paper,12pt,reqno]{amsart}
\usepackage[a4paper,margin=1.9cm,top=2.5cm,bottom=2.5cm,centering,vcentering]{geometry}
\usepackage{amsmath,amssymb,graphicx,amsfonts,tikz}
\usetikzlibrary{arrows,matrix}
\usepackage{amsfonts,hyperref} 
\usepackage{amsthm}
\newtheorem{theorem}{Theorem}[section]

\usepackage[utf8]{inputenc}
\usepackage[english]{babel}
\DeclareMathOperator{\ped}{ped}
\DeclareMathOperator{\podd}{pod}
\DeclareMathOperator{\de}{DE1}
\DeclareMathOperator{\dee}{DE2}
\DeclareMathOperator{\deee}{DE3}
\DeclareMathOperator{\ddo}{DO1}
\DeclareMathOperator{\doo}{DO2}
\DeclareMathOperator{\dooo}{DO3}
\usepackage{ulem}

\linespread{1.05}

\title[Combinatorial Proofs of Some Results of Andrews and El Bachraoui]{Combinatorial Proofs of Some Results of Andrews and El Bachraoui}

       \author[P. J. Mahanta]{Pankaj Jyoti Mahanta}
    \address[P. J. Mahanta]{Gonit Sora, Dhalpur, Assam 784165, India}
    \email{pjm2099@gmail.com}
    
\author[M. P. Saikia]{Manjil P. Saikia}
\address[M. P. Saikia]{Mathematical and Physical Sciences division, School of Arts and Sciences, Ahmedabad University, Ahmedabad 380009, Gujarat, India}
\email{manjil@saikia.in}

\date{\today}
\keywords{Combinatorial equalities, Partitions, Restricted Partitions.}

\subjclass[2020]{05A17, 11P81.}

\begin{document}

\begin{abstract}
Recently, Andrews and El Bachraoui (2024) proved three very interesting $q$-series identities, from which three simple looking identities involving certain restricted partitions into distinct even parts and $4$-regular partitions follow. In this short note, we give combinatorial proofs of these identities. We also prove the counterpart identities for the restricted partitions into distinct odd parts.
\end{abstract}

\maketitle
\section{Introduction}

A partition of $n$ is a non-increasing sequence $\lambda=(\lambda_1, \lambda_2, \ldots, \lambda_k)$ such that $\lambda_1\geq \lambda_2\geq \cdots \geq \lambda_k$ and $\sum\limits_{i=1}^k\lambda_i = n$. For instance the $7$ partitions of $5$ are
\[
5, 4+1, 3+2, 3+1+1, 2+2+1, 2+1+1+1, 1+1+1+1+1.
\]
Over the last several decades, various mathematicians have studied many different aspects of partitions as well as several restricted classes of partitions. For a general survey of the theory of partitions, we refer the reader to the books of Andrews \cite{gea1} and Johnson \cite{Johnson}.

In this paper we are interested in the following restricted sets of partitions
\begin{itemize}
    \item $P_{\ped}(n)=$ the set of partitions of $n$ with distinct even parts,
    \item $P_{\ped{>1}}(n)=$ the subset of partitions in $P_{\ped}(n)$ such that parts are strictly larger than $1$,
    \item $D1(n)=$ the set of partitions of $n$ in which no even part is repeated and the largest part is odd,
    \item $D2(n)=$ the set of partitions of $n$ in which no even is repeated, and the largest part is odd and it appears at least twice, and
    \item $D3(n)=$ the set of partitions of $n$ in which no even part is repeated, and the largest part is odd and appears exactly once.
\end{itemize}
We denote the cardinalities of the above sets by $\ped(n)$, $\ped_{>1}(n)$, $\de(n)$, $\dee(n)$ and $\deee(n)$ respectively.

Recently, Andrews and El Bachraoui \cite{AndrewsElBachraoui} proved the following very simple looking but intriguing formulas, using $q$-series techniques.

\begin{theorem}[Corollary 1, \cite{AndrewsElBachraoui}]\label{cor1}
    For $n>0$, we have
    \[
    \de(n)+\de(n-1)=\ped(n).
    \]
\end{theorem}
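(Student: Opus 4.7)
The plan is to construct an explicit bijection between $P_{\ped}(n)$ and the disjoint union $D1(n) \sqcup D1(n-1)$, from which the identity follows by counting. The natural case split is according to the parity of the largest part $\lambda_1$ of a partition $\lambda \in P_{\ped}(n)$.

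First I would observe that if $\lambda_1$ is odd, then $\lambda$ already satisfies the defining conditions of $D1(n)$, namely distinct even parts and odd largest part; conversely, every element of $D1(n)$ lies in $P_{\ped}(n)$ with odd largest part. So the identity map gives a bijection between the partitions of $n$ in $P_{\ped}(n)$ whose largest part is odd and the set $D1(n)$.

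For the remaining partitions in $P_{\ped}(n)$, whose largest part $\lambda_1 = 2k$ is even (and necessarily unique, since even parts are distinct), I would define the map that replaces $2k$ by $2k-1$. The resulting partition has total $n-1$; its new largest part $2k-1$ is odd (since all other parts are at most $2k-1$); and its even parts remain distinct. Hence the image lies in $D1(n-1)$. The inverse takes $\mu \in D1(n-1)$ with odd largest part $\mu_1$ and promotes a single copy of $\mu_1$ to $\mu_1 + 1$: the new part is even, strictly greater than every remaining part of $\mu$, and in particular differs from every preexisting even part (all of which are at most $\mu_1$), so distinctness of even parts is preserved and the resulting partition is in $P_{\ped}(n)$ with even largest part.

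I do not expect a genuine obstacle here: the essential point is that distinctness of even parts in $P_{\ped}(n)$ forces the even largest part (when it exists) to be unique, which makes the surgery $2k \leftrightarrow 2k-1$ well-defined in both directions. Verifying that the two maps invert each other and that the case split is disjoint and exhaustive is then mechanical, and combining the two bijections yields $\ped(n) = \de(n) + \de(n-1)$.
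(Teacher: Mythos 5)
Your proposal is correct and is essentially the paper's own argument: the paper describes the same bijection in the opposite direction, taking $\lambda\in D1(n-1)$ and adding $1$ to one copy of the (odd) largest part to land in the even-largest-part piece of $P_{\ped}(n)$, while the odd-largest-part piece is identified with $D1(n)$. Your observation that distinctness of even parts forces the even largest part to be unique is exactly the point that makes the surgery invertible.
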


\begin{theorem}[Corollary 2, \cite{AndrewsElBachraoui}]\label{cor2}
    For all $n>0$, we have
    \[
    \dee(n)+\dee(n-3)=\ped_{>1}(n).
    \]
\end{theorem}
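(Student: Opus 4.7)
The plan is to exhibit an explicit bijection $\phi \colon \ped_{>1}(n) \longrightarrow \dee(n) \sqcup \dee(n-3)$, in the same spirit as the bijection underlying Theorem \ref{cor1}. The trivial part of the map is immediate: if $\lambda \in \ped_{>1}(n)$ has its largest part odd and of multiplicity at least two, then $\lambda \in \dee(n)$ already, and I set $\phi(\lambda) = \lambda$. Small hand-checks at $n = 6, 8, 10, 11$ strongly suggest that these ``Case A'' partitions are in natural bijection with the subset of $\dee(n)$ consisting of partitions that do not contain a part equal to $1$.

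For the remaining partitions I further split according to whether $\lambda$ contains a part equal to $2$ (which, if present, must appear exactly once, since the even parts of $\lambda$ are distinct). If $\lambda$ contains no $2$, then every part of $\lambda$ is at least $3$, and I will define $\phi(\lambda) \in \dee(n)$ to be a partition that does contain a $1$. The idea is to rewrite the largest part of $\lambda$, which is either odd of multiplicity one or even, as a sum of smaller parts chosen so that a smaller odd value (typically $3$) becomes the new largest part with multiplicity at least two, with the leftover mass deposited as a string of $1$s and possibly a $2$. If $\lambda$ does contain a $2$, I first delete that $2$, apply the previous rewriting to the resulting partition $\lambda^{\ast} \in \ped_{>1}(n-2)$ to obtain some $\mu^{\ast} \in \dee(n-2)$ containing a $1$, and then remove one copy of $1$ from $\mu^{\ast}$ to land in $\dee(n-3)$.

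The main obstacle I expect is formulating the rewriting rule on the ``no-$2$'' sub-class uniformly and invertibly. The natural moves appear to be $o \mapsto 3 + 1 + \cdots + 1$ (with $o-3$ ones) when the largest part is an odd $o$, so that combined with an already-present $3$ the new largest odd gains multiplicity two, and $E \mapsto 3 + 3 + 1 + \cdots + 1$ or $4 \mapsto 3 + 1$ when the largest is even; the one-part case $\lambda = (E)$ must be handled separately, by $\lambda \mapsto 1^{E}$. The delicate situations to check are when $\lambda$ has no $3$ already present and when the largest odd value coincides with the next odd below the largest even, where the naive rules conflict and one must choose a different split to avoid collisions with the image of a Case A partition. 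Once the rewriting is pinned down, the inverse of $\phi$ is obtained by undoing the splits (for the $\dee(n)$ branch) or by prepending a $1$, inverting the rewriting, and prepending a $2$ (for the $\dee(n-3)$ branch), thereby yielding the required bijection.
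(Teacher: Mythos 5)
Your overall skeleton is plausible and is organized differently from the paper: you split $P_{\ped>1}(n)$ by the presence or absence of a part $2$ and reach $D2(n-3)$ by deleting a $2$ and a $1$, whereas the paper splits $P_{\ped}(n)$ by the shape of the two largest parts (namely $(2\ell+1,2\ell+1,\ldots)$, $(2\ell+1,2\ell,\ldots)$, $(2\ell,h,\ldots)$, $(2\ell+1,h,\ldots)$) together with the presence or absence of a part $1$, and reaches $D2(n-3)$ by adding $2$ and $1$ to the two largest parts. Your ``Case A'' observation (that the $\lambda\in P_{\ped>1}(n)$ whose largest part is odd of multiplicity at least two are exactly the $1$-free members of $D2(n)$) is correct, and the reduction you arrive at --- matching the all-parts-$\geq 3$ partitions of $\ped_{>1}$ type whose largest part is even or odd-of-multiplicity-one against the members of $D2$ that contain a $1$ --- is essentially equivalent to the reduction the paper performs with its sets $A,B,C,D$.

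However, the proof is not complete: the entire combinatorial content of the theorem lives in the ``rewriting rule'' that you leave unconstructed. You state the target of the rule (split the largest part so that a smaller odd value acquires multiplicity two, depositing the remainder as $1$s), list the cases where the naive versions of the rule fail (no $3$ present; the largest odd equal to the largest even minus one; the single-part partition), and then write ``once the rewriting is pinned down'' the inverse follows. But pinning it down --- giving explicit, mutually non-colliding, invertible formulas in every case, including the exceptional ones --- is exactly the hard part; it is the analogue of the paper's two displayed maps from $C$ to $A$ and from $D$ to $B$ together with its careful bookkeeping of the leftover sets $A',B',C',D'$. Without it there is no bijection and no proof. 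A concrete symptom: under your routing the partition $(2)\in P_{\ped>1}(2)$ contains a $2$, so it is sent to delete-a-$2$-and-a-$1$ territory, i.e.\ to $D2(-1)=\emptyset$, while the identity requires it to correspond to $(1,1)\in D2(2)$; your separate rule $\lambda=(E)\mapsto 1^{E}$ would fix this only if you specify that it takes precedence over the ``contains a $2$'' test, which you do not. Until the map is written out in full and checked to be well defined and bijective on each block of the decomposition, the argument remains a plan rather than a proof.
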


\begin{theorem}[Corollary 3, \cite{AndrewsElBachraoui}]\label{cor3}
    For all $n>0$, we have
    \[
    \deee(n+2)+\deee(n-1)=\ped(n).
    \]
\end{theorem}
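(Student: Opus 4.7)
The plan is to construct an explicit bijection
\[
\phi : P_{\ped}(n) \longrightarrow D3(n+2) \sqcup D3(n-1),
\]
by splitting the left-hand side according to the shape of its top. Writing $\ell = \lambda_1$ for the largest part of $\lambda \in P_{\ped}(n)$, I would partition $P_{\ped}(n) = T_+ \sqcup T_-$ with
\[
T_+ = \{\lambda : \ell \text{ is odd, or } \ell \text{ is even and } \ell-1 \in \lambda\}, \qquad T_- = \{\lambda : \ell \text{ is even and } \ell-1 \notin \lambda\},
\]
and then biject $T_+$ with $D3(n+2)$ and $T_-$ with $D3(n-1)$.

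On $T_+$, I would define $\phi(\lambda)$ by removing one copy of $\ell$ (when $\ell$ is odd) or of $\ell-1$ (when $\ell$ is even) and inserting $\ell+2$ or $\ell+1$ respectively; in both cases the inserted part is odd and strictly larger than every remaining part, so it becomes the unique largest part of $\phi(\lambda)$, while distinct-evenness is inherited because no even part is altered. On $T_-$, I would define $\phi(\lambda)$ by replacing $\ell$ with $\ell-1$, which is odd; the hypothesis $\ell-1 \notin \lambda$ guarantees that $\ell-1$ is the unique largest part of $\phi(\lambda)$, with the even parts remaining distinct. The sums shift by $+2$ and $-1$ respectively, placing $\phi(\lambda)$ in $D3(n+2)$ or in $D3(n-1)$.

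For the inverse I would send $\mu \in D3(n+2)$ with largest part $L$ to the partition obtained by removing $L$ and inserting $L-2$, and $\nu \in D3(n-1)$ with largest part $L$ to the partition obtained by removing $L$ and inserting $L+1$. A quick case analysis on $\mu_2$ via the three possibilities $\mu_2 \le L-3$, $\mu_2 = L-2$, $\mu_2 = L-1$ recovers the three forward subcases of $T_+$ (odd top with multiplicity one, odd top with multiplicity at least two, and even top with $\ell-1$ present in $\lambda$), while the $D3(n-1)$ inverse falls squarely into $T_-$ because the removed $L$ is the unique odd maximum. The main obstacle will be the bookkeeping of multiplicities on the top two parts: in the $T_+$ subcase where $\ell$ is even and $\ell-1$ appears with high multiplicity in $\lambda$, one must verify that removing only one copy of $\ell-1$ and inserting $\ell+1$ still yields a unique odd maximum, which uses distinct-evenness of $\ell$ in $\lambda$ together with the strict inequality $\ell+1 > \ell$.
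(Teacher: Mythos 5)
Your proposal is correct and is essentially the paper's proof read in the opposite direction: the paper adds $1$ to the largest part of a $D3(n-1)$ partition to reach exactly your set $T_-$, and subtracts $2$ from the largest part of a $D3(n+2)$ partition, classified by the same three cases on the second part ($\mu_2 = L-1$, $\mu_2 = L-2$, $\mu_2 \le L-3$), to reach $T_+$. The multiplicity concern you flag at the end is harmless, since only even parts are required to be distinct and the inserted part $\ell+1$ exceeds every existing part.
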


\noindent At the end of their paper, Andrews and El Bachraoui \cite{AndrewsElBachraoui} asked for combinatorial proofs of the above results. We give such proofs in this paper, in Section \ref{sec:two}. We note that, Andrews and El Bachraoui \cite{AndrewsElBachraoui} gave the right hand sides of the above results in terms of $4$-regular partitions, which are equinumerous with partitions into even parts distinct. We also note that Chen and Zou \cite{ChenZou} also proved these identities combinatorially, but their proofs are different from our proofs.

We can also ask for the corresponding results if we look at restricted partitions where all odd parts are distinct. To that end, we define the following sets:
\begin{itemize}
    \item $P_{\podd}(n)=$ set of all partitions of $n$ with distinct odd parts,
    \item $P_{\podd>2}(n)=$ subset of $P_{\podd}(n)$ where all parts are greater than $2$,
    \item $O1(n)=$ set of partitions of $n$ in which no odd part is repeated and the largest part is even,
    \item $O2(n)=$ subset of $O1(n)$ where the largest part appears at least twice, and
    \item $O3(n)=$ subset of $O1(n)$ where the largest part appears exactly once.
\end{itemize}
We denote the cardinalities of the above sets by $\podd(n)$, $\podd_{>2}(n)$, $\ddo(n)$, $\doo(n)$ and $\dooo(n)$ respectively. Similar to the results of Andrews and El Bachraoui \cite{AndrewsElBachraoui}, we have the following results.

\begin{theorem}\label{pod-cor1}
    For all $n>1$, we have
    \[
    \ddo(n)+\ddo(n-1)=\podd(n).
    \]
\end{theorem}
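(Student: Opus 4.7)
My plan is to prove Theorem \ref{pod-cor1} by a direct bijection
$$\phi: P_{\podd}(n) \longrightarrow O1(n) \sqcup O1(n-1),$$
mirroring the bijection one would use for Theorem \ref{cor1} with the roles of odd and even parts interchanged. The construction splits the domain according to the parity of the largest part of each $\lambda \in P_{\podd}(n)$.

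If the largest part of $\lambda$ is even, then $\lambda$ automatically lies in $O1(n)$ (both sets demand distinct odd parts), and I would let $\phi(\lambda) = \lambda$. If instead the largest part of $\lambda$ is odd, it occurs exactly once because odd parts in $\lambda$ are distinct; writing it as $2k+1$, I would define $\phi(\lambda)$ by subtracting $1$ from this unique largest part, producing a partition of $n-1$.

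The verifications I expect to carry out are mild: (i) the new largest part of $\phi(\lambda)$ is the even number $2k$, since every other part of $\lambda$ is at most $2k+1$ and the value $2k+1$ is forbidden by the distinctness of odd parts, so every other part is in fact at most $2k$; (ii) the remaining odd parts of $\phi(\lambda)$ stay distinct, as we removed one and left the rest untouched. Thus $\phi(\lambda) \in O1(n-1)$. The inverse map is then forced: on $O1(n)$ it is the identity, and on $\mu \in O1(n-1)$ with (even) largest part $2k$ it replaces one copy of $2k$ by $2k+1$. The new largest part $2k+1$ is odd and occurs once, while the preexisting odd parts of $\mu$ are bounded above by $2k-1$ and distinct, so odd parts remain distinct in the image, placing it in $P_{\podd}(n)$ with odd largest part.

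The only edge case that requires care is when the odd largest part of $\lambda$ equals $1$, which forces $\lambda = (1)$ and hence $n = 1$; the hypothesis $n > 1$ excludes this degeneracy and ensures that the two cases of the forward map genuinely partition $P_{\podd}(n)$ into the preimages of $O1(n)$ and $O1(n-1)$. Summing cardinalities then yields $\podd(n) = \ddo(n) + \ddo(n-1)$, completing the proof.
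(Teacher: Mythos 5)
Your proposal is correct and is essentially the argument the paper intends: the paper omits the proof of Theorem \ref{pod-cor1}, noting it is analogous to that of Theorem \ref{cor1}, which uses exactly this bijection (adding $1$ to one copy of the largest part of a partition in $O1(n-1)$, equivalently your inverse of subtracting $1$ from the unique odd largest part), together with the identification of $O1(n)$ as the subset of $P_{\podd}(n)$ with even largest part.
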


\begin{theorem}\label{pod-cor2}
    For all $n>4$, we have
    \[
    \doo(n)+\doo(n-3)=\podd_{>2}(n).
    \]
\end{theorem}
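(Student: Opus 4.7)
I would prove Theorem~\ref{pod-cor2} by constructing an explicit bijection
\[
\Phi\colon O2(n)\sqcup O2(n-3)\longrightarrow P_{\podd>2}(n),
\]
following the strategy of the proof of Theorem~\ref{cor2} in Section~\ref{sec:two}, with the roles of odd and even parts interchanged and the small-part threshold raised from $1$ to $2$. Both $O2$ and $P_{\podd>2}$ consist of partitions with distinct odd parts and unrestricted repetition of even parts; the difference is that $O2$ forces the largest part to be even of multiplicity $\ge 2$, while $P_{\podd>2}$ forces every part to be $\ge 3$. Hence $\Phi$ must eliminate $1$'s and $2$'s from each input $\mu\in O2$, and on the $O2(n-3)$-branch must also account for a shift by $3$.

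Concretely, given $\mu\in O2(n)$ with largest part $2m$ of multiplicity $r\ge 2$, if $2m=2$ so every part of $\mu$ lies in $\{1,2\}$, then $\Phi(\mu)$ is the single part $n$. Otherwise, the $1$'s and $2$'s of $\mu$, together with a possible part equal to $3$, are merged into copies of $2m$ by a combination of two moves: pairing a small summand $s$ with a single copy of $2m$ to produce a new part $2m+s$, and, when individual pairing is obstructed, pre-combining equal small parts such as $2+2\to 4$ or $1+1\to 2$ and iterating. For $\mu\in O2(n-3)$, the same procedure is applied and then a new part $3$ is adjoined, with the convention that if $\mu$ already contains a $3$ then the adjoined $3$ and the existing $3$ are merged into an even $6$ before being added. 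The output $\Phi(\mu)$ has all parts $\ge 3$ and preserves odd-distinctness because each newly created part is either even or strictly exceeds every odd part originally in $\mu$; the total is preserved by construction.

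The inverse $\Phi^{-1}$ reads off from $\pi\in P_{\podd>2}(n)$ whether it arose from $O2(n)$ or $O2(n-3)$ by inspecting the presence of a $3$ together with the multiplicity and parity of the largest parts of $\pi$, and then reverses the merging step by step, splitting a designated large part back into $2m$ together with the appropriate small summands. The main obstacle I foresee is the finite but intricate case analysis needed to make the merging well defined and compatible with its inverse: individual pairing can force the image to coincide with another partition's identity image (requiring an aggregation instead), the multiplicity $r$ may be too small to pair each small summand separately, the input may already contain a $3$ that must itself be absorbed into a copy of $2m$, and the adjoined $3$ on the $O2(n-3)$-branch may clash with a preexisting $3$. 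These cases must be dispatched in parallel with the corresponding ones in the proof of Theorem~\ref{cor2}; once they are, checking that $\Phi$ is well-defined, injective, and surjective reduces to a routine verification.
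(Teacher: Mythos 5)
Your plan is a program for a proof rather than a proof: the central object, the merging map $\Phi$, is never actually defined. You describe a toolbox of moves (pair a small part $s$ with one copy of the largest part $2m$ to form $2m+s$; pre-combine $1+1\to2$ or $2+2\to4$ ``when individual pairing is obstructed''; absorb an existing $3$; adjoin a $3$ on the $O2(n-3)$ branch, merging it with a preexisting $3$), but these moves do not determine a unique output, and the rule for choosing among them is essentially ``avoid colliding with some other partition's image,'' which is circular: one cannot verify well-definedness, injectivity, or surjectivity of a map whose values are specified only by reference to the values it takes elsewhere. Concretely, for $n=12$ the partitions $(6,6)$ and $(4,4,4)$ of $O2(12)$ are fixed by $\Phi$, and $(4,4,2,2)\in O2(12)$ can be sent by your moves either to $(6,6)$ (pair each $2$ with a $4$) or to $(8,4)$ (pre-combine $2+2\to4$ first); nothing in your description selects $(8,4)$ except the a posteriori observation that $(6,6)$ is already taken. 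Likewise $(4,4,1)\in O2(9)$ is sent to $(5,4,3)$, and $(4,4,3,1)\in O2(12)$ lands on the same partition unless its $3$ is also absorbed; whether absorbing it (to get $(7,5)$) creates a fresh collision is again left unchecked. Every obstacle you list in your final paragraph is of this kind, and resolving them \emph{is} the content of the theorem; calling the remainder ``a routine verification'' does not make it one.

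For comparison, the paper's proof avoids iterative merging entirely: it maps $O2(n-3)$ into partitions of $n$ by adding $2$ and $1$ to the two largest parts, observes that these images together with $O2(n)$ differ from $P_{\podd>2}(n)$ in two explicitly described blocks (images containing a $1$ or a $2$, versus partitions with all parts at least $3$ whose largest part is odd or is an unrepeated even part), and then writes down closed-form maps between those blocks, separately for $n$ even and $n$ odd, that rebuild the whole partition in a single step. To salvage your approach you would need to bring the merging rules to that level of explicitness and then actually carry out the bijectivity check.
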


\begin{theorem}\label{pod-cor3}
    For all $n>2$, we have
    \[
    \dooo(n+2)+\dooo(n-1)=\podd(n).
    \]
\end{theorem}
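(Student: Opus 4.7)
My plan is to prove Theorem~\ref{pod-cor3} by constructing an explicit bijection $\phi : P_{\podd}(n) \to O3(n+2) \sqcup O3(n-1)$, mirroring the expected combinatorial proof of Theorem~\ref{cor3} with the roles of ``odd'' and ``even'' interchanged. Given $\lambda \in P_{\podd}(n)$, I split on the largest part $\ell$ of $\lambda$. \textbf{Case (i):} if $\ell$ is odd and $\ell-1$ is \emph{not} a part of $\lambda$, I would reduce the unique copy of $\ell$ to $\ell-1$; the result sums to $n-1$, its new largest part $\ell-1$ is even and appears once (since $\ell-1 \notin \lambda$ and all remaining parts are at most $\ell-2$), and its odd parts are still distinct, so the image lies in $O3(n-1)$. \textbf{Case (ii):} if $\ell$ is even, I would replace one copy of $\ell$ by $\ell+2$; the new largest part $\ell+2$ is even and unique, the odd parts are untouched, and the image lies in $O3(n+2)$.

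\textbf{Case (iii)}, where $\ell$ is odd but $\ell-1$ is a part of $\lambda$, is the crux. Here I would perform a two-step exchange: replace $\ell$ by $\ell+1$ and one copy of $\ell-1$ by $\ell$. The sum then increases by $2$, and the new largest part $\ell+1$ is even and appears exactly once. To verify that the odd parts of the image remain distinct, note that the step $\ell \to \ell+1$ deletes the odd occurrence of $\ell$ while the step $\ell-1 \to \ell$ reinserts exactly one odd copy of $\ell$; all other odd parts of $\lambda$ are at most $\ell-2$ and are untouched. Hence the multiset of odd parts is preserved, and the image lies in $O3(n+2)$.

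Finally I would verify $\phi$ is a bijection by exhibiting its inverse $\psi$. A partition $\mu \in O3(n-1)$ with largest part $\ell'$ is sent to the partition obtained by replacing $\ell'$ with $\ell'+1$, inverting Case (i). A partition $\mu \in O3(n+2)$ with largest part $\ell'$ is split according to whether $\ell'-1$ is a part of $\mu$: if yes, invert Case (iii) by replacing $\ell'$ with $\ell'-1$ and the unique copy of $\ell'-1$ (unique because odd parts of $\mu$ are distinct) with $\ell'-2$; if no, invert Case (ii) by replacing the unique copy of $\ell'$ with $\ell'-2$. The main obstacle is Case (iii), where two parts are modified simultaneously and one must confirm that both the distinctness of the odd parts and the uniqueness of the new largest even part are preserved; once this is handled by the exchange above, the remaining checks are mechanical and $\phi\circ\psi$ and $\psi\circ\phi$ are easily seen to be identities, completing the proof.
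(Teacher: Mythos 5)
Your proof is correct and is essentially the paper's intended argument: the paper omits the proof of Theorem \ref{pod-cor3}, stating it is analogous to that of Theorem \ref{cor3}, and your three cases (decrease an odd largest part by $1$ when its predecessor is absent, landing in $O3(n-1)$; otherwise increase by $2$, landing in $O3(n+2)$) are exactly that analogue read in the forward direction. Your Case (iii) exchange amounts to adding $2$ to the unique copy of $\ell-1$, which is precisely the inverse of the paper's ``subtract $2$ from the largest part'' applied to partitions of type $(2j+2,2j+1,h,\ldots)$ in $O3(n+2)$.
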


\noindent The proofs of Theorems \ref{pod-cor1} and \ref{pod-cor3} are similar to the proofs of Theorems \ref{cor1} and \ref{cor3} respectively, so we omit them here. We prove Theorem \ref{pod-cor2} in Section \ref{sec:three}. It is natural to also look for $q$-series proofs of the above results, we leave that as an open problem.

\section{Combinatorial Proofs of Theorems \ref{cor1}, \ref{cor2} and \ref{cor3}}\label{sec:two}

\begin{proof}[Proof of Theorem \ref{cor1}]
    We start with a $\lambda \in D1(n-1)$ and add $1$ to the largest part (if there are repeated largest parts, then we add $1$ to only one of those parts), resulting in the partition $\mu$, which is a partition of $n$ in which no even part is repeated and the largest part is even. Clearly, the largest part of $\mu$ is even and appears only once, thus $\mu$ is a partition of $n$ with no even parts being repeated. So, $\de(n-1)$ counts the number of partitions in $P_{\ped}(n)$ where the largest part is even, and $\de(n)$ counts the number of partitions in $P_{\ped}(n)$ where the largest part is odd. Thus,
    \[
    |D1(n)\cup D1(n-1)|=|P_{\ped}(n)|,
    \]
    and this completes the proof.
\end{proof}

\begin{proof}[Proof of Theorem \ref{cor2}]
   We start by noticing that the partitions in $D2(n)$ are of the following form: $\lambda = (2j+1, 2j+1, h, \ldots)$ for some $h,j$ with $h\leq 2j+1$. Adding $2$ and $1$ to the first two parts of a partition $\lambda \in D2(n-3)$ results in a partition of the form $(2j^\prime+1, 2j^\prime, h^\prime, \ldots)$ with $h^\prime \leq 2j^\prime -1$. Both are clearly partitions in $P_{\ped}(n)$. The other type of partitions in $P_{\ped}(n)$ are of the following types:
   \begin{itemize}
       \item $(2\ell, h, \ldots)$, with $h\leq 2\ell-1$, and
       \item $(2\ell+1, h, \ldots)$, with $h\leq 2\ell-1$.
   \end{itemize}

   Let us define the following sets of partitions
   \begin{itemize}
       \item $A=\{(2\ell+1, 2\ell+1, h, \ldots)\in D2(n): h\leq 2\ell+1~\text{and, at least one $1$ is a part}\}$,
         \item $B=\{(2\ell+1, 2\ell, h, \ldots)\in P_{\ped}(n): h\leq 2\ell-1~\text{and, at least one $1$ is a part}\}$,
      \item $C=\{(2\ell, h, \ldots)\in P_{\ped}(n): h\leq 2\ell-1~\text{and, no $1$ is a part}\}$, and
      \item $D=\{(2\ell+1, h, \ldots)\in P_{\ped}(n): h\leq 2\ell-1~\text{and, no $1$ is a part}\}$.
   \end{itemize}

We construct a map from $C$ to $A$ in the following way:
\begin{equation}
    (2\ell, h, \ldots)\rightarrow \begin{cases}
        (h,h,\ldots, \underbrace{1, 1, \ldots, 1}_{2\ell-h}), &\text{if $h$ is odd},\\
        (h-1, h-1, \ldots, \underbrace{1,1, \ldots, 1}_{2\ell-h+2}), & \text{if $h$ is even and $h\neq 2$}.
    \end{cases}
\end{equation}
If $h$ is odd, then $2\ell-h$ is odd and $\geq 1$, while if $h$ is even then $2\ell-h+2$ is even and $\geq 4$. Thus, partitions in $A$ with exactly two $1$'s are not yet mapped, as is the partition $(1,1,\ldots, 1)\in A$ not mapped. On the other hand, if $n$ is even then $(n), (n-2, 2)\in C$ and are not mapped yet. We define
\begin{itemize}
    \item $A^\prime=\{\lambda\in A: \text{the part $1$ appears exactly twice}\}\cup \{(1, 1, \ldots, 1)\}$, and
    \item $C^\prime=\{(n), (n-2,2)\}$, with $n$ even.
\end{itemize}

Like before, we construct a map from $D$ to $B$. We assume $2\ell+1-h\neq 2$.
\begin{equation}
    (2\ell+1, h, \ldots)\rightarrow \begin{cases}
        (h+2, h+1, \ldots, \underbrace{1,1,\ldots, 1}_{2\ell-h-2}), & \text{if $h$ is odd},\\
         (h+1, h, \ldots, \underbrace{1,1,\ldots, 1}_{2\ell-h}), & \text{if $h$ is even}.
    \end{cases}
\end{equation}
If $n$ is even then $(n-2,2)\notin D$, so the partition of the form $(3,2,1,\ldots, 1)\in B$ is not mapped in this case. We define
\begin{itemize}
    \item $B^\prime=\{(3,2,1,\ldots,1)\}$, with $n$ even, and
    \item $D^\prime=\begin{cases}
        \{\lambda\in D: 2\ell+1-h=2\}\cup \{(n)\}, \quad \text{if $n$ is odd},\\
        \{\lambda\in D:2\ell+1-h=2\}, \quad \text{if $n$ is even}.
    \end{cases}$
\end{itemize}
We want to show $|C^\prime\cup D^\prime|=|A^\prime\cup B^\prime|$, which would immediately imply $|C\cup D|=|A\cup B|$. This is not difficult to show, we define the following maps:

\textbf{Case I ($n$ is odd)} The relevant map is
\[
(n)\rightarrow (\underbrace{1,1,\ldots, 1}_{n})
\]
and
\[
(2\ell+1, 2\ell-1, \ldots)\rightarrow (2\ell-1, 2\ell-1, \ldots, k, 1,1), \quad \text{with $k\geq 2$}.
\]

\textbf{Case II ($n$ is even)} The relevant map is
\[
(n)\rightarrow (\underbrace{1,1,\ldots, 1}_{n}),
\]
\[
(2\ell+1, 2\ell-1, \ldots)\rightarrow (2\ell-1, 2\ell-1, \ldots, k, 1,1), \quad \text{with $k\geq 2$},
\]
and
\[
(n-2,2)\rightarrow (3,2,\underbrace{1,1,\ldots, 1}_{n-5}).
\]
This proves the result.
\end{proof}

\begin{proof}[Proof of Theorem \ref{cor3}]
    We start with a $\lambda\in D3(n-1)$ and add $1$ to the largest part, resulting in the partition $\mu$. Clearly $\mu\in P_{\ped}(n)$ and is of the form $(2j, k, \ldots)$, for some $k, j$ with $k\leq 2j-2$. 
    
    On the other hand, the partitions in $D3(n+2)$ are of the following three types
    \begin{itemize}
        \item[(a)] $(2j+1, 2j, h, \ldots)$ for some $j, h$ with $h\leq 2j-1$.
        \item[(b)] $(2j+1, 2j-1, h, \ldots)$ for some $j,h$ with $h\leq 2j-1$.
         \item[(c)] $(2j+1, h, \ldots)$ for some $j,h$ with $h<2j-1$.
    \end{itemize}
    Now, if we subtract $2$ from the partitions above then the resulting partition is in $P_{\ped}(n)$. Clearly, all partitions in $P_{\ped}(n)$ are accounted for from the above discussion. This completes the proof.
\end{proof}

\section{Proof of Theorem \ref{pod-cor2}}\label{sec:three}

The partitions in $O2(n)$ are of the form $(2\ell, 2\ell, h, \ldots)$ with $h\leq 2\ell$. We add $2$ and $1$ respectively to the two largest parts of the partitions in $O2(n-3)$. The resulting partitions are of the form $(2\ell, 2\ell-1, h, \ldots)$ with $h\leq 2\ell-2$. This is a partition in $P_{\podd}(n)$. The remaining partitions in $P_{\podd}(n)$ are of the form $(2\ell+1, h, \ldots$) with $h\leq 2\ell$ or $(2\ell, h, \ldots)$ with $h\leq 2\ell-2$.

We now define the following sets
\begin{itemize}
    \item $A=\{(2\ell, 2\ell, h, \ldots)\in O2(n): h\leq 2\ell~\text{and $2$ or $1$ appears as a part at least once}\}$,
    \item $B=\{(2\ell, 2\ell-1, h, \ldots)\in P_{\podd}(n): h\leq 2\ell-2~\text{and $2$ or $1$ appears as a part at least once}\}$,
    \item $C=\{(2\ell+1, h, \ldots)\in P_{\podd}(n): h\leq 2\ell~\text{and each part is at least 3}\}$,
    \item $D=\{(2\ell,  h, \ldots)\in P_{\podd}(n): h\leq 2\ell-2~\text{and each part is at least 3}\}$.
\end{itemize}
We construct maps like in the proof of Theorem \ref{cor2}. We just list the relavant maps here without going the details.

Let $n$ be even, then
\[
(2\ell+1, h, \ldots)\rightarrow \begin{cases}
    (h,h,\ldots, \underbrace{2, 2, \ldots, 2}_{2\ell-h}, 1),&\text{if $h$ is even},\\
    (h-1,h-1,\ldots, \underbrace{2,2,\ldots, 2}_{2\ell+3-h}),&\text{if $h$ is odd and $h\neq 3$},\\
    (4,3,2,2,\ldots,2,1),&\text{if $h=3$},
\end{cases}
\]
\noindent and
\[
(2\ell, h, \ldots)\rightarrow \begin{cases}
(2,2,\ldots,2),&\text{if $h=0$},\\
    (h+2,h+1,\ldots, \underbrace{2, 2, \ldots, 2}_{2\ell-h-4}, 1),&\text{if $h$ is even and $2\ell-h\neq 2$},\\
    (h+1,h,\ldots, \underbrace{2,2,\ldots, 2}_{2\ell-h-1}),&\text{if $h$ is odd},\\
    (h,h,\ldots, 2),&\text{if $h$ is even and $2\ell-h=2$}.
\end{cases}
\]

Let $n$ be odd, then
\[
(2\ell+1,h, \ldots)\rightarrow \begin{cases}
    (2,2,\ldots, 2, 1),&\text{if $h=0$},\\
    (h,h,\ldots, \underbrace{2,2,\ldots,2}_{2\ell-h},1),&\text{if $h$ is even},\\
    (h-1,h-1,\ldots, \underbrace{2, 2, \ldots, 2}_{2\ell-h+3}),&\text{if $h$ is odd},
\end{cases}
\]
\noindent and
\[
(2\ell, h, \ldots)\rightarrow \begin{cases}
    (h+2,h+1, \ldots, \underbrace{2,2,\ldots,2}_{2\ell-h-4},1),&\text{if $h$ is even and $2\ell-h\neq 2$},\\
    (h+1,h,\ldots, \underbrace{2,2,\ldots,2}_{2\ell-h-1}),&\text{if $h$ is odd},\\
    (h,h,\ldots, 2),&\text{if $h$ is even and $2\ell-h=2$}.
\end{cases}
\]

\end{document}